\documentclass[pdflatex,sn-mathphys-num]{sn-jnl}

\usepackage{graphicx}%
\usepackage{multirow}%
\usepackage{amsmath,amssymb,amsfonts}%
\usepackage{amsthm}%
\usepackage{mathrsfs}%
\usepackage[title]{appendix}%
\usepackage{xcolor}%
\usepackage{textcomp}%
\usepackage{manyfoot}%
\usepackage{booktabs}%
\usepackage{algorithm}%
\usepackage{algorithmicx}%
\usepackage{algpseudocode}%
\usepackage{listings}%

\theoremstyle{thmstyleone}%
\newtheorem{theorem}{Theorem}%
\newtheorem{proposition}{Proposition}% 
\theoremstyle{thmstyletwo}%
\newtheorem{remark}{Remark}%
\theoremstyle{thmstylethree}%
\theoremstyle{thmstylethree}
\newtheorem{assumption}{Assumption}
\begin{document}

\title[Muon on the Stiefel Manifold Admits an Exact Closed-Form Update]{Muon on the Stiefel Manifold Admits an Exact Closed-Form Update}

\author*[1]{\fnm{Mikhail} \sur{Solonko}}\email{msolonko@hse.ru}

\author[1]{\fnm{Alexander} \sur{Molozhavenko}}

\author[1]{\fnm{Maxim} \sur{Rakhuba}}

\affil[1]{\orgname{HSE University}, \city{Moscow}, \country{Russia}}

\abstract{We study Muon, a recently proposed matrix-aware optimization method, in the context of the Stiefel manifold. This manifold consists of matrices with orthonormal columns and is ubiquitous in machine learning and scientific computing. Existing extensions of Muon to this manifold rely on heuristic, approximate, or iterative updates with varying computational efficiency. We show that the corresponding Stiefel Muon update admits an exact closed-form solution and use this result to develop Skewon, a practical algorithm for orthogonality-constrained optimization with an efficient implementation. We further establish first-order convergence guarantees for Skewon in the smooth non-convex setting.}

\keywords{Constrained optimization, Muon, Stiefel manifold, Riemannian optimization}

\maketitle

\section{Introduction}\label{sec1}
Recently introduced for training deep neural networks, Muon~\cite{jordan2024muon} is an optimizer that has shown promising empirical results, often outperforming standard methods like Adam~\cite{jordan2024muon, shen2025convergence}. 
For the optimization of a smooth objective function, Muon maintains a momentum matrix $M \in \mathbb{R}^{m \times n}$, with $m \geq n$,
and, at each iteration, implicitly solves the following linear minimization
oracle (LMO):
\begin{equation}
\min
\langle M,B\rangle
\quad
\text{subject to}
\quad
\| B\|_2\leq 1,
\label{eq:muon_problem}
\end{equation}
where $\|\cdot\|_2$ denotes the matrix spectral norm. Unlike element-wise optimizers, restricting the spectral norm allows Muon to explicitly account for the underlying matrix structure of the parameters. For a full-rank matrix
$M$, the solution to this problem (which is then used as a weight update) is the negative polar
factor of $M$, obtained from its singular value decomposition:
\begin{equation*}
    B^* = -UV^\top,
    \quad
    M = U\Sigma V^\top.
\end{equation*}

Importantly, Muon can be naturally generalized to optimization problems where the parameter space has a smooth manifold structure. Prime examples are orthogonality constraints widely used across many deep learning architectures. In particular, to integrate the geometric benefits with the efficiency of Muon, recent studies~\cite{bernstein2025manifolds,cesista2025spectralclipping,kexuefm-11221, yang2026manifold, imuon} have investigated its Riemannian adaptations to the Stiefel manifold, defined as
\begin{equation*}
\mathrm{St}(n,p)
=
\left\{
X \in \mathbb{R}^{n\times p}
\;\middle|\;
X^\top X = I_p
\right\},
\quad n>p.
\label{eq:stiefel-manifold}
\end{equation*}
The resulting LMO, which we refer to as the \emph{Stiefel Muon problem} (SMP),  takes the form
\begin{equation}
\min
\langle M,B\rangle
\quad
\text{subject to}
\quad
\| B\|_2\leq 1,
\quad
B\in T_X\mathrm{St}(n,p),
\label{eq:stiefel_muon_problem}
\end{equation}
where $T_X\mathrm{St}(n,p)$ denotes the tangent space at
$X \in \mathrm{St}(n,p)$:
\begin{equation}
T_X\mathrm{St}(n,p)
=
\left\{
B\in\mathbb{R}^{n\times p}
\;\middle|\;
X^\top B+B^\top X=0
\right\}.
\label{eq:stiefel-tangent-space}
\end{equation}

Despite the apparent simplicity of the SMP, existing approaches either solve the tangent-space LMO approximately through iterative procedures~\cite{bernstein2025manifolds,cesista2025spectralclipping, kexuefm-11221}, which introduce significant computational overhead per step, or avoid the exact tangent-space problem via relaxations~\cite{yang2026manifold, imuon}. In contrast, we show that the SMP~\eqref{eq:stiefel_muon_problem} admits an exact closed-form solution and translate this result into a practical algorithm with a computationally efficient implementation, eliminating the need for expensive iterative approximations or geometric relaxations.

Our main contributions are as follows:
\begin{itemize}
\item We formulate an auxiliary optimization problem over skew-symmetric matrices (Section~\ref{sec:A}), derive an exact analytical solution (Section~\ref{sec:A}) and establish its equivalence to the SMP~\eqref{eq:stiefel_muon_problem} (Section~\ref{sec:B}).
\item We introduce \emph{Skewon}, an efficient algorithm for evaluating this analytical solution (Section~\ref{sec:C}).
\item We establish convergence guarantees for Skewon on non-convex smooth objectives under standard assumptions in Riemannian optimization~\cite{boumal2023introduction} (Section~\ref{sec:D}).
\end{itemize}

\subsection{Related Work}
Orthogonality constraints arise naturally across computational mathematics, data analysis, and machine learning. Representative examples include principal component analysis and eigenvalue or invariant-subspace computations~\cite{absil2008optimization,edelman1998geometry}, low-parameter adaptation~\cite{aliev2026orthofuse}, orthogonality-constrained neural networks~\cite{bansal2018can}, and low-rank adaptations with orthogonal factors~\cite{lion2025polar}.
Optimization on the Stiefel manifold has been studied extensively, with efficient first- and second-order Riemannian algorithms described in~\cite{edelman1998geometry, absil2008optimization, boumal2023introduction}. 

In parallel, geometry-aware optimization methods based on spectral geometry have emerged as a promising alternative to conventional Euclidean methods in unconstrained optimization. The Muon optimizer~\cite{jordan2024muon}, for example, performs steepest descent with respect to the spectral norm rather than the Euclidean norm. Its tensor-aware generalization based on a relaxation of the tensor spectral norm was proposed in~\cite{bogachev2026tensorion}. Related work has also explored extensions of the Muon linear minimization oracle (LMO) to Riemannian settings~\cite{bogachev2025lora,imuon}. In particular, adapting Muon to the Stiefel manifold was posed as an open problem in~\cite{bernstein2025manifolds}, primarily because of the difficulty of solving the associated spectral minimization problem~\eqref{eq:stiefel_muon_problem}. Several approaches have recently been proposed. Cesista~\cite{cesista2025spectralclipping} introduced two heuristic methods based on alternating-projection fixed-point iterations and ternary search over nearby feasible points. A fixed-point scheme derived from first-order optimality conditions was proposed in~\cite{kexuefm-11221}, while Bernstein~\cite{bernstein2025manifolds} developed an approach based on Lagrangian duality. Because no simple closed-form solution to the SMP was previously known, these methods rely on iterative procedures and may therefore incur substantial computational cost. Other works avoid the original SMP by replacing it with modified optimization subproblems~\cite{imuon,yang2026manifold}. More broadly, a major computational bottleneck in both Muon and its Stiefel-manifold variants is the evaluation of the matrix sign function (\texttt{msign}), motivating a growing body of work on accelerating the corresponding iterations~\cite{jordan2024muon,grishina2025accelerating,amsel2025polar}.

\subsection{Notation}
Throughout the paper, \(\mathbb{R}^{n\times p}\) denotes the space of real \(n\times p\) matrices equipped with the Frobenius inner product \(\langle X,Y\rangle=\operatorname{tr}(X^\top Y)\),
and the induced Frobenius norm \(\|X\|_F=\sqrt{\langle X,X\rangle}\).

We denote by \(\mathbb{S}^n\) and \(\mathcal{A}^n\) the spaces of symmetric and skew-symmetric \(n\times n\) matrices, respectively. The set of orthogonal \(n\times n\) matrices is denoted by \(O(n)\). For any square matrix \(A\)
\[
\operatorname{sym}(A)=\frac12(A+A^\top),\qquad
\operatorname{skew}(A)=\frac12(A-A^\top).
\]

For \(A\in\mathbb{R}^{m\times n}\), we denote by
\(
\|A\|_2=\sigma_1(A),
\|A\|_*=\sum_i\sigma_i(A)
\)
its spectral and nuclear norms, respectively, where
\(
\sigma_1(A)\ge\sigma_2(A)\ge\cdots\ge0
\)
are the singular values of \(A\). If \(m\ge n\) and \(A\) has full column rank, we define
\[
\operatorname{msign}(A)
=
A(A^\top A)^{-1/2}
=
UV^\top,
\]
where \(A=U\Sigma V^\top\) is a thin singular value decomposition (thin SVD). 

Throughout the paper, \(\operatorname{grad}f(X)\) denotes the Riemannian gradient of a smooth function \(f\) on the Stiefel manifold with respect to the Frobenius metric. The orthogonal projection onto the tangent space at \(X\in\mathrm{St}(n,p)\) is
\[
\mathcal{P}_X(G)
=
G-X\operatorname{sym}(X^\top G),
\]
and \(\mathcal{R}_X:T_X\mathrm{St}(n,p)\rightarrow\mathrm{St}(n,p)\) denotes a retraction.

\section{Skewon problem}\label{sec:A}
In this section, we derive a tractable optimization problem over skew-symmetric matrices and obtain a closed-form solution. The connection between this new formulation and the SMP~\eqref{eq:stiefel_muon_problem} is established in the subsequent sections.

If $Y \in \mathcal{A}^n$, then according to the definition~\eqref{eq:stiefel-tangent-space}, $YX \in T_X \mathrm{St}(n,p)$. Therefore
\[
\left\{
YX \in \mathbb{R}^{n \times p}
\;\middle|\;
Y \in \mathcal{A}^n
\right\} \subset T_X\mathrm{St}(n,p).
\]
In addition, if $\|Y\|_2 \leq 1$, then
\[
\|YX\|_2 \leq \|Y\|_2\|X\|_2 \leq 1.
\]
So, naturally, we can introduce the following problem:
\begin{equation}
\min_{Y} \langle M, YX \rangle
\quad
\mathrm{s.t.}\;
\|Y\|_{2}\le1, \quad
Y\in \mathcal{A}^n,
\label{eq:skewon_problem_0}
\end{equation}
or equivalently
\begin{equation}
\min_{Y} \langle \mathrm{skew} (MX^\top), Y \rangle
\quad
\mathrm{s.t.}\;
\|Y\|_{2}\le1, \quad
Y\in \mathcal{A}^n,
\label{eq:skewon_problem_1}
\end{equation}
when removing the skew-symmetry constraint, we get \emph{Skewon problem}:
\begin{equation}
\min_{Y} \langle \, \operatorname{skew}\left(MX^\top\right), Y\rangle
\quad
\mathrm{s.t.}\;
\|Y\|_{2}\le1.
\label{eq:skewon_problem_2}
\end{equation}
In both problems~\eqref{eq:skewon_problem_1} and~\eqref{eq:skewon_problem_2}, the objective depends only on the skew-symmetric part of \(Y\), because the subspaces of symmetric and skew-symmetric matrices are orthogonal with respect to the Frobenius inner product. Consequently, if \(Y\) is an optimal solution of problem~\eqref{eq:skewon_problem_1}, then \(Y+S\) is optimal for any symmetric matrix \(S\) for problem~\eqref{eq:skewon_problem_2} such that
\[
\|Y+S\|_2 \leq 1.
\]
Moreover, skew-symmetry does not increase the spectral norm. Thus, if we have found the non-skew-symmetric solution to the Skewon problem~\eqref{eq:skewon_problem_2}, then its skew-symmetric part will also be a solution to both problems with the same values of the functionals.

The following proposition shows that the Skewon problem~\eqref{eq:skewon_problem_2} admits an analytical solution, analogous to that of the original Muon LMO.

\begin{proposition}\label{prop:1}
Let $M \in \mathbb{R}^{n \times p}$ be arbitrary, $X \in \mathrm{St}(n,p)$ and denote $N = \operatorname{skew}(MX^\top)$. Let also $N = U \Sigma V^\top$  be an SVD of \(N\) and \(r = \mathrm{rank} \: N\):
\begin{equation*}
    N = \begin{bmatrix} U_1 & U_2 \end{bmatrix} 
    \begin{bmatrix} \Sigma_r & \mathbf{0} \\ \mathbf{0} & \mathbf{0} \end{bmatrix} 
    \begin{bmatrix} V_1^\top \\ V_2^\top \end{bmatrix}.
\end{equation*}
The optimal objective value for the Skewon problem~\eqref{eq:skewon_problem_2} is $(-\|N\|_*)$, and the general optimal solution in terms of the SVD is given by
\begin{equation} \label{eq:Y_opt_svd}
    Y_{\mathrm{opt}} = -U \begin{bmatrix} 
        I_r & \mathbf{0} \\ \mathbf{0} & W 
    \end{bmatrix} V^\top = -U_1 V_1^\top - U_2 W V_2^\top,
\end{equation}
where $\|W\|_2 \le 1$. 

In particular, if the matrix $N$ is nonsingular, then the optimal
solution to the Skewon problem~\eqref{eq:skewon_problem_2} is given by
\begin{equation*}
    Y_{\mathrm{opt}}
    =
    -\operatorname{msign}\left(
        N
    \right) = - UV^\top.
\end{equation*}
\end{proposition}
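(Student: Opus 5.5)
The plan is to invoke the duality between the spectral and nuclear norms, exactly as for the classical Muon LMO~\eqref{eq:muon_problem}. For any $Y\in\mathbb{R}^{n\times n}$ with $\|Y\|_2\le 1$, von Neumann's trace inequality gives
\[
\langle N,Y\rangle \ge -\sum_i \sigma_i(N)\sigma_i(Y) \ge -\|N\|_*.
\]
Taking $Y=-U_1V_1^\top$ yields $\langle N,Y\rangle=-\operatorname{tr}(V_1\Sigma_r U_1^\top U_1V_1^\top)=-\operatorname{tr}(\Sigma_r)=-\|N\|_*$, and $\|U_1V_1^\top\|_2=1$. Hence the optimal value is $-\|N\|_*$.

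Next we verify that every matrix of the form~\eqref{eq:Y_opt_svd} is optimal. Since $U$ and $V$ are orthogonal, $\|Y_{\mathrm{opt}}\|_2=\|\operatorname{diag}(I_r,W)\|_2=\max(1,\|W\|_2)\le 1$ whenever $r\ge 1$. If $r=0$, then $N=0$, every feasible $Y$ is optimal, and the claim reduces to $Y=-UWV^\top$ with $\|W\|_2\le1$, which is exactly the unit spectral ball. For the objective, $\langle N, U_2WV_2^\top\rangle=\operatorname{tr}(V_1\Sigma_rU_1^\top U_2WV_2^\top)=0$ because $U_1^\top U_2=0$. So these matrices attain $-\|N\|_*$.

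The main step is the converse: every optimizer has this form. Let $Y$ be feasible and optimal, and write $Z=U^\top YV$, partitioned into blocks $Z_{11},Z_{12},Z_{21},Z_{22}$. Then $\|Z\|_2\le1$ and
\[
\langle N,Y\rangle=\operatorname{tr}(\Sigma_r Z_{11}),
\]
so optimality means $\sum_{i\le r}\sigma_i (Z_{11})_{ii}=-\sum_{i\le r}\sigma_i$. Every entry of $Z$ satisfies $|Z_{ij}|\le\|Z\|_2\le 1$, and all $\sigma_i>0$ for $i\le r$. This forces $(Z)_{ii}=-1$ for all $i\le r$. If a column of $Z$ has unit norm bound and a diagonal entry equal to $-1$, all its other entries vanish, because $\|Ze_i\|_2\le 1$. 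The same holds for rows, using $\|e_i^\top Z\|_2\le 1$. Therefore $Z_{11}=-I_r$, $Z_{12}=0$ and $Z_{21}=0$. Setting $W=-Z_{22}$, we have $\|W\|_2\le\|Z\|_2\le 1$, and $Y=UZV^\top$ has the stated form. This row/column argument is the crux; it avoids any issue with repeated singular values, which would complicate an argument via the equality case of von Neumann's inequality.

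Finally, if $N$ is nonsingular, then $r=n$ and the $W$-block is empty. The optimizer is then unique, $Y_{\mathrm{opt}}=-UV^\top=-\operatorname{msign}(N)$, since $\operatorname{msign}(N)=N(N^\top N)^{-1/2}=U\Sigma V^\top V\Sigma^{-1}V^\top=UV^\top$.
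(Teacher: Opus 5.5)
Your proof is correct and follows essentially the same route as the paper: rotate by the SVD factors (your $Z = U^\top Y V$ is the paper's $-\hat{Y}$), bound each diagonal entry by the spectral norm, and use the equality case together with the unit row/column norm bound to force the block structure $\operatorname{diag}(I_r, W)$. Your appeal to von Neumann's inequality for the lower bound is redundant given the diagonal-entry bound you use anyway. Your explicit check that every matrix of the stated form is feasible and optimal, including the case $r=0$, fills in a direction the paper leaves implicit.
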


\begin{proof} 
Let $Y$ be any feasible solution. Define $\hat{Y} = -U^\top Y V$. Since the spectral norm is unitarily invariant, we have $\|\hat{Y}\|_2 = \|Y\|_2 \le 1$. We can express the objective function in terms of $\hat{Y}$:
\begin{equation*}
    \langle N, Y \rangle 
    = \operatorname{tr}(N^\top Y) 
    = \operatorname{tr}(V \Sigma U^\top Y) 
    = \operatorname{tr}(\Sigma U^\top Y V) 
    = -\operatorname{tr}(\Sigma \hat{Y}) 
    = -\sum_{i=1}^r \sigma_i(N) \hat{Y}_{ii}.
\end{equation*}
Since $\|\hat{Y}\|_2 \le 1$, we have the lower bound:
\begin{equation*}
    \langle N, Y \rangle = -\sum_{i=1}^r \sigma_i(N) \hat{Y}_{ii} \ge -\sum_{i=1}^r \sigma_i(N) \cdot 1 = -\|N\|_*.
\end{equation*}
This establishes that the optimal objective value is indeed $-\|N\|_*$. 

For a matrix $Y$ to be optimal, it must achieve this lower bound with equality. This implies that $\hat{Y}_{ii} = 1$ for all $i = 1, \dots, r$. If a matrix has a spectral norm bounded by $1$ and a diagonal element equal to $1$, all other entries in the corresponding row and column must be exactly zero. Therefore, $\hat{Y}$ must have the block-diagonal structure:
\begin{equation*}
    \hat{Y} = \begin{bmatrix} I_r & \mathbf{0} \\ \mathbf{0} & W \end{bmatrix},
\end{equation*}
for some arbitrary block $W$ satisfying $\|W\|_2 \le 1$. Substituting $Y = -U \hat{Y} V^\top$ yields the general optimal solution:
\begin{equation*}
    Y_{\mathrm{opt}} = -U \begin{bmatrix} I_r & \mathbf{0} \\ \mathbf{0} & W \end{bmatrix} V^\top = -U_1 V_1^\top - U_2 W V_2^\top,
\end{equation*}
which proves that no other optimal solutions exist.

Furthermore, if \(N\) is nonsingular, then \(r=n\), and the blocks
\(U_2\), \(V_2\), and \(W\) are absent. The general solution~\eqref{eq:Y_opt_svd} then reduces uniquely to:
\begin{equation*}
    Y_\mathrm{opt} = -U I_n V^\top = -U V^\top = -\operatorname{msign}(N),
\end{equation*}
which concludes the proof.
\end{proof}

\begin{remark}\label{corollary:boundary_optimum_skewon}
As we can see in equation~\eqref{eq:Y_opt_svd}, if the matrix \(N\) is nonzero (which is equivalent to \(\mathcal{P}_XM \neq 0\)), then the solution of the Skewon problem~\eqref{eq:skewon_problem_2} is achieved on the boundary of the spectral-norm unit ball.
\end{remark}

For subsequent proofs, we will need one more auxiliary proposition.

\begin{proposition}\label{lemma:boundary_optimum_stifel_muon}
Every optimal solution of the SMP~\eqref{eq:stiefel_muon_problem} with nonzero $\mathcal{P}_XM$ is attained on the boundary of the
spectral-norm unit ball, i.e. $\left\|B_{\mathrm{opt}}\right\|_2 = 1$.
\end{proposition}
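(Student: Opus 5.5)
Assume, towards a contradiction, that some optimal solution $B_{\mathrm{opt}}$ of the SMP~\eqref{eq:stiefel_muon_problem} satisfies $\|B_{\mathrm{opt}}\|_2<1$. The plan is to exploit two facts: the feasible set is the intersection of a linear subspace, $T_X\mathrm{St}(n,p)$, with a convex ball, and the objective is linear. An optimum lying in the interior of the ball can then be perturbed within the subspace to strictly decrease the objective, unless the objective is constant on the subspace.

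The key steps are as follows.

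First, I record that $\mathcal{P}_X$ is the orthogonal projection onto $T_X\mathrm{St}(n,p)$ with respect to the Frobenius inner product. Hence for every $B\in T_X\mathrm{St}(n,p)$ we have $\langle M,B\rangle=\langle \mathcal{P}_XM,B\rangle$.

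Second, I set $D=-\mathcal{P}_XM$. This matrix lies in the tangent space and is nonzero by assumption. Consider $B_t=B_{\mathrm{opt}}+tD$ for small $t>0$. Each $B_t$ lies in the tangent space because the tangent space is linear. By the triangle inequality, $\|B_t\|_2\le\|B_{\mathrm{opt}}\|_2+t\|D\|_2<1$ whenever $t<(1-\|B_{\mathrm{opt}}\|_2)/\|D\|_2$, so $B_t$ is feasible.

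Third, I compute the objective along this path:
\[
\langle M,B_t\rangle=\langle M,B_{\mathrm{opt}}\rangle-t\|\mathcal{P}_XM\|_F^2<\langle M,B_{\mathrm{opt}}\rangle .
\]
This contradicts the optimality of $B_{\mathrm{opt}}$. Therefore $\|B_{\mathrm{opt}}\|_2=1$, since the constraint already gives $\|B_{\mathrm{opt}}\|_2\le 1$.

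No real obstacle is expected. The one point needing care is justifying $\langle M,B\rangle=\langle\mathcal{P}_XM,B\rangle$ on the tangent space. This amounts to checking that $X\operatorname{sym}(X^\top M)$ is orthogonal to every tangent vector:
\[
\langle X S,B\rangle=\operatorname{tr}(S X^\top B)=\operatorname{tr}(S\operatorname{skew}(X^\top B))=0
\]
for symmetric $S$, because $X^\top B$ is skew-symmetric by~\eqref{eq:stiefel-tangent-space} and the trace of a symmetric matrix times a skew-symmetric matrix vanishes. Alternatively, one can cite that $\mathcal{P}_X$ is the orthogonal projection, as stated in the notation section.
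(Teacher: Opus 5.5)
Your proof is correct, and it takes a slightly different route from the paper's. The paper first shows that the optimal value is strictly negative by exhibiting the feasible point $-\mathcal{P}_XM/\|\mathcal{P}_XM\|_2$, whose objective is $-\|\mathcal{P}_XM\|_F^2/\|\mathcal{P}_XM\|_2<0$. It then rules out an interior optimum by radial rescaling: $\tilde B=B_{\mathrm{opt}}/\|B_{\mathrm{opt}}\|_2$ stays in the tangent space, has unit norm, and multiplies the negative objective by a factor larger than one. You instead move from $B_{\mathrm{opt}}$ along the fixed descent direction $-\mathcal{P}_XM$. This is the generic argument that a linear functional which is not constant on a subspace cannot be minimized at an interior point of a convex set in that subspace. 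Your route needs no sign information about the optimal value and no homogeneity of the constraint. It also handles $B_{\mathrm{opt}}=\mathbf{0}$ together with every other interior point, whereas the paper's case $0<\|B_{\mathrm{opt}}\|_2<1$ relies on its first step to exclude $B_{\mathrm{opt}}=\mathbf{0}$. In exchange, the paper's route exploits the fact that the constraint is a norm ball centered at the origin. It produces an explicit improving point on the boundary and, as a by-product, the upper bound $-\|\mathcal{P}_XM\|_F^2/\|\mathcal{P}_XM\|_2$ on the optimal value. Both arguments rest on the same identity $\langle M,\mathcal{P}_XM\rangle=\|\mathcal{P}_XM\|_F^2$. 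You verify it explicitly, while the paper takes it from the orthogonality of $\mathcal{P}_X$.
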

\begin{proof}
See Section~\ref{lemma:boundary_optimum_stifel_muon_proof}.
\end{proof}

\section{Equivalence of Stiefel Muon and Skewon problems}\label{sec:B}
In this section, we investigate the relationship between \(\mathcal{A}^n\) and \(T_X\mathrm{St}(n,p)\). As a result, we show that the Skewon problem~\eqref{eq:skewon_problem_2} is not a relaxation of the SMP~\eqref{eq:stiefel_muon_problem}, but rather an \emph{equivalent reformulation}.

\begin{theorem}[Direct sum decomposition of $\mathcal{A}^n$]\label{thm:stiefel_decomposition}
Let $X \in \mathrm{St}(n,p)$. The subspace of skew-symmetric matrices $\mathcal{A}^n$ decomposes into the direct sum of two orthogonal subspaces:
\[
\mathcal{A}^n = \mathcal{L}_X \oplus \mathcal{K}_X,
\]
where
\begin{align}
\mathcal{L}_X &\mathrel{:=} \left\{ BX^\top - XB^\top - X(X^\top B)X^\top \;\middle|\; B \in T_X\mathrm{St}(n,p) \right\}\label{eq:dsd_rotation}, \\
\mathcal{K}_X &\mathrel{:=} \left\{ (I_n - XX^\top)C(I_n - XX^\top) \;\middle|\; C \in \mathcal{A}^n \right\}\label{eq:dsd_stabilizer}.
\end{align}
In particular, any matrix $Y \in \mathcal{A}^n$ uniquely decomposes as $Y = \tilde{Y} + Y_{\mathrm{ker}}$ with $\tilde{Y} \in \mathcal{L}_X$ and $Y_{\mathrm{ker}} \in \mathcal{K}_X$, satisfying:
\begin{enumerate}
    \item $Y_{\mathrm{ker}} X = \mathbf{0}$,
    \item $\tilde{Y} X = YX = B \in T_X\mathrm{St}(n,p)$.
\end{enumerate}
\end{theorem}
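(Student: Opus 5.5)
We will work in the orthogonal splitting $\mathbb{R}^n = \operatorname{range}(X)\oplus\operatorname{range}(X)^\perp$. Set $P = XX^\top$ and $Q = I_n - XX^\top$. Every $Y\in\mathcal{A}^n$ then splits into four blocks:
\[
Y = PYP + QYP + PYQ + QYQ .
\]
Skew-symmetry of $Y$ gives $PYQ = -(QYP)^\top$, and it also makes $PYP$ and $QYQ$ skew-symmetric. The plan is to identify $\mathcal{K}_X$ with the $QYQ$ block and $\mathcal{L}_X$ with the sum of the other three blocks. Both descriptions are evidently linear subspaces of $\mathcal{A}^n$.

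\textbf{Step 1: $\mathcal{L}_X\subset\mathcal{A}^n$ and the action on $X$.} Take $B\in T_X\mathrm{St}(n,p)$. Then $\Omega := X^\top B$ is skew, because $X^\top B + B^\top X = 0$. Define $L(B) := BX^\top - XB^\top - X\Omega X^\top$. Its transpose is
\[
L(B)^\top = XB^\top - BX^\top - X\Omega^\top X^\top = -L(B),
\]
so $L(B)$ is skew. Using $X^\top X = I_p$ we get
\[
L(B)X = B - X\Omega^\top - X\Omega = B,
\]
since $\Omega^\top + \Omega = 0$. Hence $L$ is injective. Also, for every $Y\in\mathcal{L}_X$ we have $YX = B$.

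\textbf{Step 2: $\mathcal{K}_X\subset\mathcal{A}^n$ and $\mathcal{K}_X X = 0$.} For $Y_{\ker} = QCQ$ with $C$ skew, the matrix is skew because $Q$ is symmetric. Moreover $Y_{\ker}X = 0$ because $QX = 0$.

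\textbf{Step 3: orthogonality.} For $Y = L(B)\in\mathcal{L}_X$ and $K = QCQ\in\mathcal{K}_X$ we have
\[
\langle Y, K\rangle = \operatorname{tr}(Y^\top QCQ) = \operatorname{tr}\bigl((QYQ)^\top C\bigr).
\]
So it suffices to show $QL(B)Q = 0$. Each term of $L(B)$ carries a factor $X^\top$ on the right or $X$ on the left, and $QX = 0$, so every term is annihilated by $Q$ on the appropriate side. Orthogonality gives $\mathcal{L}_X\cap\mathcal{K}_X = \{0\}$.

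\textbf{Step 4: spanning.} This is the main point, though it is still routine. Given $Y\in\mathcal{A}^n$, set $B := YX$; it lies in the tangent space because $X^\top YX$ is skew. Then compute
\[
L(YX) = YP - PY^\top\cdot(-1)\ldots
\]
More carefully:
\[
L(YX) = YXX^\top - X(YX)^\top - X(X^\top YX)X^\top = YP + PY - PYP .
\]
Expanding $YP$ and $PY$ in blocks, this equals $PYP + QYP + PYQ$, that is, $Y - QYQ$. Therefore
\[
Y = L(YX) + QYQ,
\]
where the second summand lies in $\mathcal{K}_X$ (take $C = Y$). Uniqueness follows from the trivial intersection established in Step 3.

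Items 1 and 2 of the theorem then follow. Item 1 is Step 2. For item 2, Step 1 gives $\tilde{Y}X = B$, and $\tilde{Y}X = YX - Y_{\ker}X = YX$. The only real care needed is the sign bookkeeping in Step 4, namely $PY^\top = -PY$ when passing from $X(YX)^\top$ to $PY$.
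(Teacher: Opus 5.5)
Your argument is correct and is essentially the paper's: both split $Y$ along $\operatorname{range}(X)\oplus\operatorname{range}(X)^\perp$ and identify $\mathcal{K}_X$ with the $(\perp,\perp)$ block. You differ only in working with the projectors $XX^\top$ and $I_n-XX^\top$ instead of an explicit completion $\begin{bmatrix} X & X_\perp\end{bmatrix}\in O(n)$, in getting the trivial intersection from orthogonality rather than from block inspection plus a dimension count, and in checking $\tilde Y\in\mathcal{L}_X$ through the single identity $L(YX)=Y-(I_n-XX^\top)Y(I_n-XX^\top)$. One cleanup: the aborted first display in Step 4 (the one ending in $\cdot(-1)\ldots$) is garbled and should be deleted, since the computation after ``More carefully'' is the correct one.
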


\begin{proof}
Let $Q = \begin{bmatrix} X & X_\perp \end{bmatrix} \in O(n)$ be an orthogonal matrix obtained by completing $X$. Any skew-symmetric matrix $Y \in \mathcal{A}^n$ can be represented in the basis $Q$ as
\[
Y = Q \begin{bmatrix} 
    A & -S^\top \\ 
    S & D 
\end{bmatrix} Q^\top,
\]
where $A \in \mathcal{A}^p$, $D \in \mathcal{A}^{n-p}$, and $S \in \mathbb{R}^{(n-p) \times p}$. We can naturally split $Y$ into two components $Y = \tilde{Y} + Y_{\mathrm{ker}}$, defined by:
\begin{equation}\label{eq:dsd_stabilizer_and_rotation_explicit_repr}
\tilde{Y} \mathrel{:=} Q \begin{bmatrix} 
    A & -S^\top \\ 
    S & \mathbf{0} 
\end{bmatrix} Q^\top, \quad 
Y_{\mathrm{ker}} \mathrel{:=} Q \begin{bmatrix} 
    \mathbf{0} & \mathbf{0} \\ 
    \mathbf{0} & D 
\end{bmatrix} Q^\top.
\end{equation}

Direct calculation shows that $Y_{\mathrm{ker}} = (I_n - XX^\top) C (I_n - XX^\top) \in \mathcal{K}_X$ for $C = Y$, and $Y_{\mathrm{ker}} X = \mathbf{0}$ because $X = Q \begin{bmatrix} I_p & \mathbf{0} \end{bmatrix}^\top$.

Multiplying $\tilde{Y}$ by $X$ yields:
\[
\tilde{Y} X = Y X = Q \begin{bmatrix} 
    A & -S^\top \\ 
    S & \mathbf{0} 
\end{bmatrix} \begin{bmatrix} I_p \\ \mathbf{0} \end{bmatrix} = Q \begin{bmatrix} A \\ S \end{bmatrix} = X A + X_\perp S \mathrel{=:} H.
\]
Since $A \in \mathcal{A}^p$, we have $X^\top H = A$, which is skew-symmetric. Thus, $ H \in T_X\mathrm{St}(n,p)$. Substituting  $H$ back into the definition of $\mathcal{L}_X$~\eqref{eq:dsd_rotation} confirms that $\tilde{Y} \in \mathcal{L}_X$.

Conversely, we must verify that any element in $\mathcal{L}_X$ has $D = \mathbf{0}$, and any element in $\mathcal{K}_X$ has $A = \mathbf{0}, S = \mathbf{0}$.

For $Y_\mathrm{ker} \in \mathcal{K}_X$, since
\[
I_n - XX^\top = Q \begin{bmatrix} \mathbf{0} & \mathbf{0} \\ \mathbf{0} & I_{n-p} \end{bmatrix} Q^\top,
\]
we have
\[
Y_\mathrm{ker} = Q \begin{bmatrix} \mathbf{0} & \mathbf{0} \\ \mathbf{0} & I_{n-p} \end{bmatrix} Q^\top Q \begin{bmatrix} 
    A & -S^\top \\ 
    S & D 
\end{bmatrix} Q^\top Q \begin{bmatrix} \mathbf{0} & \mathbf{0} \\ \mathbf{0} & I_{n-p} \end{bmatrix} Q^\top= Q \begin{bmatrix} 
    \mathbf{0} & \mathbf{0} \\ 
    \mathbf{0} & D 
\end{bmatrix} Q^\top.
\]

For $\tilde Y \in \mathcal{L}_X$ we have
\[
Q^\top \tilde Y Q = \begin{bmatrix} X^\top B & \mathbf{0} \\ X_\perp^\top B & \mathbf{0} \end{bmatrix} - \begin{bmatrix} B^\top X & B^\top X_\perp \\ \mathbf{0} & \mathbf{0} \end{bmatrix} - \begin{bmatrix} X^\top B & \mathbf{0} \\ \mathbf{0} & \mathbf{0} \end{bmatrix} = \begin{bmatrix} X^\top B & -B^\top X_\perp  \\ X_\perp^\top B & \mathbf{0} \end{bmatrix},
\]
where $X^\top B \in \mathcal{A}^p$. So,
\[
\tilde Y = Q \begin{bmatrix} X^\top B & -B^\top X_\perp \\ X_\perp^\top B & \mathbf{0} \end{bmatrix} Q^\top.
\]

If $Y \in \mathcal{L}_X \cap \mathcal{K}_X$, then $Y$ must simultaneously have $D = \mathbf{0}$ (as $Y \in \mathcal{L}_X$) and $A = \mathbf{0}, S = \mathbf{0}$ (as $Y \in \mathcal{K}_X$). Thus, all block entries vanish, implying $Y = \mathbf{0}$. Since the block components $A \in \mathcal{A}^p$, $S \in \mathbb{R}^{(n-p) \times p}$, and $D \in \mathcal{A}^{n-p}$ are independent, their degrees of freedom yield:

\[
    \dim \mathcal{L}_X = \frac{p(p-1)}{2} + p(n-p), \quad \dim \mathcal{K}_X = \frac{(n-p)(n-p-1)}{2}.
    \]
    Summing these dimensions gives:
    \[
    \dim \mathcal{L}_X + \dim \mathcal{K}_X = \left(\frac{p(p-1)}{2} + p(n-p)\right) + \frac{(n-p)(n-p-1)}{2} = \frac{n(n-1)}{2} = \dim \mathcal{A}^n.
    \]

Since the intersection is trivial and the dimensions sum up to $\dim \mathcal{A}^n$, the decomposition is a direct sum and the components $(\tilde{Y}, Y_{\mathrm{ker}})$ are unique. The explicit representation~\eqref{eq:dsd_stabilizer_and_rotation_explicit_repr} of \(\mathcal{L}_X\) and \(\mathcal{K}_X\) ensures the orthogonality of the subspaces~\eqref{eq:dsd_rotation} and \eqref{eq:dsd_stabilizer} with respect to Frobenius inner product.
\end{proof}

\begin{remark}
Geometrically, viewing $\mathcal{A}^n \equiv \mathfrak{so}(n)$ as the Lie algebra of the rotation group $O(n)$: $\mathcal{L}_X \cong T_X\mathrm{St}(n,p)$ represents infinitesimal rotations that actively tilt and shift the frame $X$, whereas $\mathcal{K}_X \cong \mathfrak{so}(n-p)$ forms the stabilizer Lie algebra of rotations acting exclusively within the complementary subspace $\mathrm{span}(X_\perp)$ that leave $X$ invariant. Although $Y_{\mathrm{ker}}$ does not change the resulting tangent vector $YX = B$, it shifts the matrix spectrum and affects the spectral norm $\|Y\|_2$. As we show next, these properties allow us to explicitly map solutions between Skewon formulation~\eqref{eq:skewon_problem_2} and the SMP~\eqref{eq:stiefel_muon_problem}.
\end{remark}

\begin{proposition}[Davis--Kahan--Weinberger {\cite{davis1982norm}}]\label{lemma-dkw}
Let
\[
T=
\begin{bmatrix}
A&C\\
B&D
\end{bmatrix}
\]
be a block operator. Then for fixed blocks \(A, B\) and \(C\) there exists the block \(D\) such that
\[
\left\|
\begin{bmatrix}
A&C\\
B&D
\end{bmatrix}
\right\|_2
=
\max\left\{
\left\|
\begin{bmatrix}
A&C
\end{bmatrix}
\right\|_2,
\left\|
\begin{bmatrix}
A\\
B
\end{bmatrix}
\right\|_2
\right\}.
\]
\end{proposition}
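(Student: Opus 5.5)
We would prove this with the classical Parrott / Davis--Kahan--Weinberger construction: factor the block matrix through a unitary ``Julia'' dilation of $A$. Set
\[
\mu=\max\left\{\left\|\begin{bmatrix} A & C\end{bmatrix}\right\|_2,\ \left\|\begin{bmatrix} A\\ B\end{bmatrix}\right\|_2\right\}.
\]
The inequality $\|T\|_2\ge\mu$ holds for every choice of $D$, because the norm of a matrix dominates the norm of any submatrix obtained by deleting rows or columns. It therefore suffices to construct a $D$ with $\|T\|_2\le\mu$. If $\mu=0$, then $A$, $B$, $C$ vanish and $D=\mathbf{0}$ works. Otherwise we rescale all blocks by $1/\mu$ and assume $\mu=1$. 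We work in finite dimensions, since that is the only case the paper needs.

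\emph{Step 1: contractive factorizations of $B$ and $C$.} Let $D_A=(I-A^\top A)^{1/2}$ and $D_{A^\top}=(I-AA^\top)^{1/2}$. The bound $\|[A;B]\|_2\le1$ is equivalent to $A^\top A+B^\top B\preceq I$, that is, $\|Bx\|\le\|D_Ax\|$ for all $x$. Hence the map $K:D_Ax\mapsto Bx$ is well defined and contractive on $\operatorname{range}(D_A)$. Extending $K$ by zero on the orthogonal complement gives a contraction with $B=KD_A$. Symmetrically, $\|[A\;C]\|_2\le1$ gives $AA^\top+CC^\top\preceq I$, and applying the same argument to $C^\top$ yields a contraction $L$ with $C=D_{A^\top}L$.

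\emph{Step 2: the Julia operator.} Define
\[
J=\begin{bmatrix} A & D_{A^\top}\\ D_A & -A^\top\end{bmatrix}.
\]
We will verify that $J$ is orthogonal. This follows from the intertwining identity $AD_A=D_{A^\top}A$, which holds because $A\,f(A^\top A)=f(AA^\top)\,A$ for polynomials $f$ and hence, by approximation, for the square root. With this identity, $J^\top J=I$ is a direct block computation.

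\emph{Step 3: choose $D$.} Set $D=-KA^\top L$. Then
\[
T=\begin{bmatrix} I&0\\0&K\end{bmatrix} J \begin{bmatrix} I&0\\0&L\end{bmatrix},
\]
which one checks blockwise: the $(1,1)$ block is $A$, the $(1,2)$ block is $D_{A^\top}L=C$, the $(2,1)$ block is $KD_A=B$, and the $(2,2)$ block is $-KA^\top L=D$. The outer factors are contractions and $J$ is orthogonal, so $\|T\|_2\le1=\mu$. Combined with the reverse inequality, this gives equality.

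The main obstacle is Steps 1--2: obtaining the contractions $K$ and $L$ (a finite-dimensional Douglas factorization, which needs care on the kernel of $D_A$) and verifying orthogonality of $J$ via the intertwining identity. If the Julia factorization becomes awkward, the fallback is the original inductive argument: complete the matrix one row or column at a time, and at each scalar step choose the new entry inside the intersection of intervals that keeps $\mu^2I-T^\top T\succeq0$, showing via a Schur complement that this intersection is nonempty.
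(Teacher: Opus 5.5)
Your proof is correct and complete. The paper does not prove this proposition; it only cites Davis--Kahan--Weinberger. Your factorizations $B=KD_A$ and $C=D_{A^\top}L$, the orthogonal Julia (Halmos) dilation $J$, and the choice $D=-KA^\top L$ reproduce the classical explicit completion from that paper: it is the member of their family of norm-minimizing completions with the free contraction set to zero.

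Compared with a bare citation, your route is longer but gives two things. First, it makes the paper self-contained. Your restriction to finite dimensions is harmless, and the same argument works on Hilbert spaces if you define $K$ on $\overline{\operatorname{range}(D_A)}$ by continuity. Second, it gives an explicit $D$, which helps in the paper's application. In Theorem~\ref{theorem:equiv_of_opt} the blocks are $A=X^\top B_*\in\mathcal{A}^p$, $B=X_\perp^\top B_*$ and $C=-B^\top$. Because $A$ is skew-symmetric, $A^\top A=AA^\top=-A^2$, so $D_A=D_{A^\top}$. Then $C=-D_AK^\top=D_{A^\top}(-K^\top)$, so you may take $L=-K^\top$ and obtain $D=-KA^\top L=-KAK^\top$. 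This $D$ is already skew-symmetric, so the paper's later step of replacing $H_0$ by $\operatorname{skew}(H_0)$, with its two-sided norm bound, becomes unnecessary. Your fallback inductive argument is not needed.
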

\begin{theorem}[Equivalence of Optima]\label{theorem:equiv_of_opt}
Let $B_*$ be an optimal solution to the SMP~\eqref{eq:stiefel_muon_problem} with nonzero $\mathcal{P}_XM$. There exists an optimal solution $Y_* \in \mathcal{A}^n$ to Skewon problem~\eqref{eq:skewon_problem_2} such that $Y_* X = B_*$, $\|Y_*\|_2 = \|B_*\|_2 = 1$, and both problems achieve identical minimum objective values.
\end{theorem}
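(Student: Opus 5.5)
The plan is to lift $B_*$ to a skew-symmetric matrix of the same spectral norm, and then compare the optimal values of the two problems. I take $Q=\begin{bmatrix}X & X_\perp\end{bmatrix}\in O(n)$ as in the proof of Theorem~\ref{thm:stiefel_decomposition}. Because $B_*\in T_X\mathrm{St}(n,p)$, we can write $B_* = XA + X_\perp S$ with $A=X^\top B_*\in\mathcal{A}^p$ and $S=X_\perp^\top B_*$. Proposition~\ref{lemma:boundary_optimum_stifel_muon} gives $\|B_*\|_2=1$. Any skew-symmetric $Y$ with $YX=B_*$ then has the form
\[
Y = Q\begin{bmatrix} A & -S^\top\\ S & D\end{bmatrix}Q^\top,\qquad D\in\mathcal{A}^{n-p},
\]
and $D$ is the only free block, by Theorem~\ref{thm:stiefel_decomposition}.

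\textbf{Step 1: constructing $Y_*$.} I apply Proposition~\ref{lemma-dkw} to the fixed blocks $A$, $S$, $-S^\top$. The first column satisfies $\left\|\begin{bmatrix}A\\ S\end{bmatrix}\right\|_2=\|Q^\top B_*\|_2=\|B_*\|_2=1$. The first row is $\begin{bmatrix}A & -S^\top\end{bmatrix} = -\begin{bmatrix}A\\ S\end{bmatrix}^\top$, using $A^\top=-A$, so its norm is also $1$. Proposition~\ref{lemma-dkw} therefore provides a block $D$ with $\|Y\|_2=1$.

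\textbf{Step 2: making $D$ skew-symmetric (main obstacle).} Proposition~\ref{lemma-dkw} only guarantees some $D$, not a skew-symmetric one. I would first replace the resulting $T$ by $\operatorname{skew}(T)=\tfrac12(T-T^\top)$. Since $\|T^\top\|_2=\|T\|_2$, this does not increase the spectral norm. The fixed blocks are already skew-consistent ($A$ is skew and the off-diagonal blocks are $S$ and $-S^\top$), so they are unchanged, and $D$ is replaced by $\operatorname{skew}(D)$. The norm cannot drop below $1$ either, because $\|Y\|_2\ge\|YX\|_2=\|B_*\|_2=1$. Hence $Y_*:=Q\,\operatorname{skew}(T)\,Q^\top\in\mathcal{A}^n$ satisfies $Y_*X=B_*$ and $\|Y_*\|_2=1$. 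This step is exactly the remark in Section~\ref{sec:A} that skew-symmetrization does not increase the spectral norm.

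\textbf{Step 3: equality of optimal values.} First, $Y_*$ is feasible for~\eqref{eq:skewon_problem_2}, and
\[
\langle \operatorname{skew}(MX^\top),Y_*\rangle=\langle MX^\top,Y_*\rangle=\langle M,Y_*X\rangle=\langle M,B_*\rangle.
\]
So the Skewon optimum is at most the SMP optimum. Conversely, let $Y$ be any optimal Skewon solution. By the discussion after~\eqref{eq:skewon_problem_2}, $\operatorname{skew}(Y)$ is also optimal with the same value, and $\|\operatorname{skew}(Y)\|_2\le 1$. Then $B=\operatorname{skew}(Y)X$ lies in $T_X\mathrm{St}(n,p)$, satisfies $\|B\|_2\le 1$, and has $\langle M,B\rangle$ equal to the Skewon optimum. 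So the SMP optimum is at most the Skewon optimum. The two optimal values therefore coincide (both equal $-\|N\|_*$ by Proposition~\ref{prop:1}). In particular $Y_*$ attains the Skewon optimum and is an optimal solution, which completes the statement.
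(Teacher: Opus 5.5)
Your proposal is correct and follows essentially the same route as the paper. You parametrize the skew-symmetric lifts of $B_*$ by the free lower-right block in the basis $Q=\begin{bmatrix}X & X_\perp\end{bmatrix}$, complete that block via Proposition~\ref{lemma-dkw}, skew-symmetrize without altering the fixed blocks or increasing the norm, and compare the two optimal values in both directions. The differences are only cosmetic: you get the row norm from $\begin{bmatrix}A & -S^\top\end{bmatrix}=-\begin{bmatrix}A\\ S\end{bmatrix}^\top$ and the lower bound from $\|Y_*\|_2\ge\|Y_*X\|_2$, where the paper uses a direct computation with $Q^\top$ and a submatrix-norm argument.
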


\begin{proof}
Using the direct sum decomposition of $\mathcal{A}^n$ (Theorem~\ref{thm:stiefel_decomposition}), we can define a family of matrices $Y(C) \in \mathcal{A}^n$ that map to $B_*$ via $Y(C)X = B_*$:
\begin{equation*}
Y(C) = B_*X^\top - XB_*^\top - X(X^\top B_*)X^\top + (I_n - XX^\top)C(I_n - XX^\top) = \tilde{Y}_* + Y_{\mathrm{ker}}(C),
\end{equation*}
where $C \in \mathcal{A}^n$.

First, we verify that the kernel component $Y_{\mathrm{ker}}(C)$ does not affect the value of the objective functional. Since $Y_{\mathrm{ker}}(C)X = \mathbf{0}$, the objective remains invariant:
\begin{align*}
&\frac{1}{2}\left\langle MX^\top - XM^\top , Y_{\mathrm{ker}} \right\rangle = \langle M, Y_{\mathrm{ker}} X \rangle = 0.
\end{align*}

However, the kernel component does dictate the spectral norm $\|Y(C)\|_2$. Because the optimal solutions for both problems are inherently attained on the boundary of the spectral-norm unit ball, i.e., $\|B_*\|_2 = 1$ (see Proposition~\ref{lemma:boundary_optimum_stifel_muon} and Remark~\ref{corollary:boundary_optimum_skewon}), it suffices to prove that there exists a suitable matrix $C \in \mathcal{A}^n$ such that $\|Y(C)\|_2 = \|B_*\|_2$.

Consider the matrix $Q^\top Y(C) Q$ after applying a similarity transformation with $Q = \begin{bmatrix} X & X_\perp \end{bmatrix}$:
\begin{equation*}
Q^\top Y(C) Q = \begin{bmatrix} X^\top B_* & -B_*^\top X_\perp \\ X_\perp^\top B_* & X_\perp^\top C X_\perp \end{bmatrix}
\end{equation*}
The block $\hat{C} = X_\perp^\top C X_\perp \in \mathcal{A}^{n-p}$ represents the projection onto the complementary subspace. Let us construct an intermediate matrix $H_0$ by replacing $\hat{C}$ with an arbitrary unconstrained block $D \in \mathbb{R}^{(n-p)\times (n-p)}$:
\begin{equation*}
H_0 = \begin{bmatrix} X^\top B_* & -B_*^\top X_\perp \\ X_\perp^\top B_* & D \end{bmatrix}
\end{equation*}
By Proposition~\ref{lemma-dkw}, there exists a block $D$ that attains the minimum possible completion norm:
\begin{equation*}
\|H_0\|_2 = \max\left\{ \left\| \begin{bmatrix} X^\top B_* & -B_*^\top X_\perp \end{bmatrix} \right\|_2, \left\| \begin{bmatrix} X^\top B_* \\ X_\perp^\top B_* \end{bmatrix} \right\|_2 \right\}
\end{equation*}

We evaluate these block norms using the unitary invariance of the spectral norm. For the horizontal block:
\begin{align*}
\left\|\begin{bmatrix} X^\top B_* & -B_*^\top X_\perp \end{bmatrix} \right\|_2 &= \left\|\begin{bmatrix} X^\top B_* & -B_*^\top X_\perp \end{bmatrix} Q^\top\right\|_2 = \\ 
&= \left\| X^\top B_* X^\top - B_*^\top X_\perp X_\perp^\top \right\|_2 = \\ 
&= \left\| - B_*^\top X X^\top - B_*^\top(I_n - X X^\top) \right\|_2 = \\ 
&= \left\|-B_*^\top\right\|_2 = \|B_*\|_2.
\end{align*}
Similarly, for the vertical block:
\begin{equation*}
\left\| \begin{bmatrix} X^\top B_* \\ X_\perp^\top B_* \end{bmatrix} \right\|_2 = \|B_*\|_2.
\end{equation*}
Thus, there exists a block $D$ such that $\|H_0\|_2 = \|B_*\|_2$. 

Because this specific block $D$ is not necessarily skew-symmetric, we define $H = \operatorname{skew}(H_0)$. Under this operation, the sub-block $D$ becomes $\operatorname{skew}(D)$, while the other blocks remain unchanged. On the one hand, by the triangle inequality:
\begin{equation*}
\|H\|_2 = \frac{1}{2} \left\|H_0 - H_0^\top\right\|_2 \le \|H_0\|_2 = \|B_*\|_2
\end{equation*}
On the other hand, the operator norm of a submatrix can never exceed the norm of the full matrix:
\begin{equation*}
\|H\|_2 \ge \left\|\begin{bmatrix} X^\top B_* & -B_*^\top X_\perp \end{bmatrix} \right\|_2 = \|B_*\|_2
\end{equation*}
Therefore, we conclude that $\|H\|_2 = \|B_*\|_2$. 

Finally, by choosing $C$ such that $\hat{C} = X_\perp^\top C X_\perp = \operatorname{skew}(D)$ (which is valid since projection onto a subspace is a surjective mapping), we construct a matrix $Y_* = Y(C) \in \mathcal{A}^n$ satisfying:
\begin{equation*}
    \|Y_*\|_2 = \left\|Q^\top Y_* Q\right\|_2 = \|H\|_2 = \|B_*\|_2.
\end{equation*}

By Proposition~\ref{lemma:boundary_optimum_stifel_muon}, the optimal Stiefel tangent vector attains the boundary of the unit ball, meaning $\|B_*\|_2 = 1$. Consequently, $\|Y_*\|_2 = 1$, which makes $Y_*$ a feasible solution for the Skewon problem. Furthermore, since $Y_* X = B_*$, the objective value achieved by $Y_*$ in the Skewon problem exactly matches the optimal value of the SMP:
\begin{equation*}
    \langle M, Y_* X \rangle = \langle M, B_* \rangle.
\end{equation*}

To establish that $Y_*$ is indeed the global optimum for the Skewon problem and that the minimum values coincide, let $Y_{\mathrm{opt}} \in \mathcal{A}^n$ be any optimal solution to the Skewon problem~\eqref{eq:skewon_problem_2} (such skew-symmetric solution always exists since the symmetric part does not affect the value of the functional and skew-symmetry does not increase the spectral norm). Consider the matrix $B_{\mathrm{opt}} = Y_{\mathrm{opt}} X \in T_X\mathrm{St}(n,p)$.

Since $Y_{\mathrm{opt}}\in\mathcal{A}^n$ is feasible for the Skewon problem~\eqref{eq:skewon_problem_2}, we have $\Vert{}Y_{\mathrm{opt}}\Vert{}_2 \le 1$. Consequently, $B_{\mathrm{opt}} \in T_X \mathrm{St}(n,p)$ and its spectral norm is bounded by $\Vert{}B_{\mathrm{opt}}\Vert{}_2 \le \Vert{}Y_{\mathrm{opt}}\Vert{}_2 \Vert{}X\Vert{}_2 \le 1$. This makes $B_{\mathrm{opt}}$ a valid feasible solution for the SMP. Therefore, the optimal value of the SMP forms a lower bound for the Skewon problem:$$\langle M, B_* \rangle \le \langle M, B_{\mathrm{opt}} \rangle = \langle M, Y_{\mathrm{opt}} X \rangle$$However, we have already constructed a feasible Skewon solution $Y_*$ that achieves exactly $\langle M, Y_* X \rangle = \langle M, B_* \rangle$. This implies:$$\langle M, Y_{\mathrm{opt}} X \rangle \le \langle M, Y_* X \rangle = \langle M, B_* \rangle$$Combining these inequalities yields $\langle M, Y_{\mathrm{opt}} X \rangle = \langle M, B_* \rangle$. Thus, $Y_*$ is a global optimum for the Skewon problem, and both formulations achieve identical minimum objective values, completing the proof of equivalence.
\end{proof}
\begin{remark}
The case $\mathcal{P}_XM = \mathbf{0}$ is trivial: one may choose $Y_*=\mathbf{0}$, and both problems attain the same optimal value 0.
\end{remark}

\begin{remark}
The proof of Theorem~\ref{theorem:equiv_of_opt} establishes the stronger result
\[
\left\{YX \colon Y^\top = -Y,\ \|Y\|_2 \leq 1\right\}
=
\left\{B \in T_X \mathrm{St}(n,p) \colon \|B\|_2 \leq 1\right\}.
\]
\end{remark}

\section{Skewon algorithm}\label{sec:C}

\begin{algorithm}[htbp]
\caption{Skewon (case $p \sim n $)}\label{alg:1}
\begin{algorithmic}[1]
\Require Initial point $X_0\in \mathrm{St}(n,p)$, number of iterations $T$, step size $\{\eta_t\}_{t=0}^{T-1}$
\Ensure $X_T$
\For{$t=0,1,\ldots,T-1$}
    \State Compute direction $M_t$
    \State $N_t = \mathrm{skew}\left(M_tX_t^\top\right)$ \Comment{$\mathcal{O}(n^2p)$} 
    \State $Y_t= -\mathrm{skew}\left(\mathrm{NS}\left(N_t \right)\right)$ \Comment{$\mathcal{O}(n^3)$}
    \State $B_t = Y_t X_t$ \Comment{$\mathcal{O}(n^2p)$}
    \State $X_{t+1} = \mathcal{R}_{X_t}\big(\eta_t B_t\big)$ \Comment{$\mathcal{O}(np^2)$}
\EndFor
\end{algorithmic}
\end{algorithm}

\begin{algorithm}[htbp]
\caption{Skewon (case $p \ll n$)}\label{alg:2}
\begin{algorithmic}[1]
\Require Initial point $X_0\in \mathrm{St}(n,p)$, \(2p \leq n\), number of iterations $T$, step size $\{\eta_t\}_{t=0}^{T-1}$
\Ensure $X_T$
\For{$t=0,1,\ldots,T-1$}
    \State Compute direction $M_t$
    \State $Q_t, R_t = \mathrm{QR}\left(\begin{bmatrix}
        -X_t & M_t
    \end{bmatrix}\right)$ \Comment{$\mathcal{O}(np^2)$}
    \State $\tilde Y_t= -\mathrm{skew}\left(\mathrm{NS}\left(\frac{1}{2}R_tJR_t^\top \right)\right)$ \Comment{$\mathcal{O}(p^3)$}
    \State $B_t = Q_t\left(\tilde Y_t(Q_t^\top X_t)\right)$ \Comment{$\mathcal{O}(np^2)$}
    \State $X_{t+1} = \mathcal{R}_{X_t}\big(\eta_t B_t\big)$ \Comment{$\mathcal{O}(np^2)$}
\EndFor
\end{algorithmic}
\end{algorithm}

In this section, we develop an efficient numerical procedure for solving the Skewon problem~\eqref{eq:skewon_problem_2}. Algorithm~\ref{alg:1} provides the simplest implementation of the proposed optimization procedure. In Line~\(2\), it computes an optimization direction \(M_t\), such as a gradient or momentum direction. In Lines~\(3\)--\(4\), it evaluates the solution to the Skewon problem~\eqref{eq:skewon_problem_2} $N_t$ from Proposition~\ref{prop:1} using the Newton--Schulz iteration. The additional skew-symmetrization in Line~\(4\) is required to mitigate round-off errors introduced by the Newton--Schulz step. In Line~\(5\), the algorithm maps the solution of the Skewon problem~\eqref{eq:skewon_problem_2} to a solution of the SMP~\eqref{eq:stiefel_muon_problem}. Finally, in Line~\(6\), it performs an update using a retraction. 

The Newton--Schulz iteration in Line~\(4\) may be replaced with more advanced methods, such as Polar Express~\cite{amsel2025polar} or CANS~\cite{grishina2025accelerating}. For greater numerical accuracy, the polar factor may instead be computed exactly using a thin SVD with singular value thresholding, which removes spurious directions introduced by skew-symmetrization (see Proposition~\ref{prop:skewSVD} in Section~\ref{sec:proofs}). In practice this approach is generally less GPU-friendly than the Newton--Schulz iteration and may, in some cases, degrade performance~\cite{gonon2026insights}. 
Different retraction maps may also be used in Line~\(6\), including the \(Q\)-factor obtained from a QR decomposition and the polar factor computed either iteratively or via the SVD.

When \(p \ll n\), Algorithm~\ref{alg:1} can be improved by evaluating the solution to the Skewon problem~\eqref{eq:skewon_problem_2} more efficiently in Lines~\(3\)--\(4\). In particular, the matrix \(\operatorname{skew}(MX^\top)\) can be expressed as follows:
\begin{equation*}
    \operatorname{skew}(MX^\top) = \frac{1}{2}\left(-XM^\top + MX^\top \right) 
    = \frac{1}{2} \begin{bmatrix} -X & M \end{bmatrix} 
    \underbrace{\begin{bmatrix} \mathbf{0} & I_p \\ -I_p & \mathbf{0} \end{bmatrix}}_{J} 
    \begin{bmatrix} -X & M \end{bmatrix}^\top.
\end{equation*}
If $2p\leq n$ (which is the case if \(p \ll n\)) by applying a QR decomposition 
\[
    \begin{bmatrix} -X & M \end{bmatrix} = Q R, \quad Q \in \mathbb{R}^{n \times 2p}, \quad R \in \mathbb{R}^{2p \times 2p}.
\]
we obtain:
\begin{equation*}
    \operatorname{skew}(MX^\top) = Q \left( \frac{1}{2} R J R^\top \right) Q^\top.\label{eq:algend}
\end{equation*}

An important structural advantage of this formulation is the smaller inner matrix $ R J R^\top\in \mathcal{A}^{2p}$. The resulting procedure is summarized in Algorithm~\ref{alg:2}.

Algorithm~\ref{alg:2} computes, in Line~3, a QR decomposition of the block matrix
\(\begin{bmatrix}
    -X_t & M_t
\end{bmatrix}\).
This keeps the computational complexity at \(\mathcal{O}(np^2)\). In Line~4, the Skewon problem~\eqref{eq:skewon_problem_2} is then solved using the smaller \(2p \times 2p\) matrix \(\frac{1}{2}R_t J R_t^\top\), which can be handled more efficiently and requires only \(\mathcal{O}(p^3)\) operations. 

Additionally, we can obtain a potential speedup in Algorithm~\ref{alg:2} if we use the structure of the block matrix $\begin{bmatrix}
        -X_t & M_t
\end{bmatrix}$. The matrix $-X_t$ already has orthonormal columns. We can use block classical Gram--Schmidt with reorthogonalization (BCGS2)~\cite{barlow2013reorthogonalized} to orthonormalize \(M_t\) against $X_t$, which simplifies to matrix multiplications and QR decomposition of an \(n \times p\) matrix. 

Combining all steps, the per-iteration complexity when $p \ll n$ is $\mathcal{O}(np^2)$ and $\mathcal{O}(n^3)$ otherwise. Thus, from a practical deep learning perspective, Skewon has the same asymptotic computational complexity as Muon while solving the optimization subproblem directly in the tangent space of the Stiefel manifold. 

\section{Convergence guarantees}\label{sec:D}
In this section, we establish first-order convergence guarantees for both Algorithms~\ref{alg:1} and~\ref{alg:2} for smooth non-convex objectives when the matrix $M_t$ is Euclidean gradient $\nabla f(X_t)$ and the polar factor computed via thin SVD. Note that since $\mathrm{St}(n,p)$ is compact for any smooth objective function $f$ there exists $f_{\mathrm{opt}} > -\infty$ such that $f(X) \geq f_{\mathrm{opt}}$ for all $X \in \mathrm{St}(n,p)$. To proceed with the analysis, we first introduce the following Assumption~\cite{boumal2023introduction}:
\begin{assumption}\label{ass:1}
     For any $X \in \mathrm{St}(n,p)$ and any tangent vector $B \in T_X \mathrm{St}(n,p)$, the following inequality holds
\begin{align*}
f(\mathcal{R}_X(B)) \leq f(X) + \langle \mathrm{grad} f(X) , B\rangle + \frac{L}{2} \|B\|^2_F.
\end{align*}
\end{assumption}

\begin{theorem}\label{theorem:convergence_garancy}
Under Assumption~\ref{ass:1}, for any realization $(X_t)_{t\in \mathbb{N}}$ of Algorithm~\ref{alg:1} and Algorithm~\ref{alg:2}, in which the polar factor is evaluated using a thin SVD, let $\Delta \geq f(X_0) - \inf_{X \in \mathrm{St}(n,p)} f(X)$, and choose the optimization step size $\eta = \sqrt{{2\Delta}/{pLT}}$. Then,
\[
\min_{t\in\{0,\ldots,T-1\}} \left\|\mathrm{grad}f(X_t)\right\|_F \leq \sqrt{\frac{8p\Delta L}{T}}.
\]
\end{theorem}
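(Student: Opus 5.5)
The plan is a standard one-step descent argument based on Assumption~\ref{ass:1}. The only structure-specific ingredients are three bounds: the inner product of the Riemannian gradient with $B_t$, the Frobenius norm of $B_t$, and a comparison between $\|N_t\|_*$ and $\|\mathrm{grad} f(X_t)\|_F$.

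First I reduce both algorithms to one update. With exact thin-SVD polar factors and thresholding of zero singular values (Proposition~\ref{prop:1} with $W=\mathbf{0}$), Algorithm~\ref{alg:1} sets $Y_t=-U_1V_1^\top$, where $N_t=\operatorname{skew}(G_tX_t^\top)$ and $G_t=\nabla f(X_t)$. For Algorithm~\ref{alg:2}, the identity $N_t=Q_t(\tfrac12R_tJR_t^\top)Q_t^\top$ with $Q_t$ having orthonormal columns shows that $Q_t\tilde Y_tQ_t^\top$ is the same minimal-rank polar factor of $-N_t$. Hence both algorithms produce the same $B_t=Y_tX_t$.

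Next come the two per-step estimates.
\begin{itemize}
\item \emph{Inner product.} Since $B_t\in T_{X_t}\mathrm{St}(n,p)$ and $\mathcal{P}_{X_t}$ is self-adjoint, $\langle \mathrm{grad} f(X_t),B_t\rangle=\langle G_t,Y_tX_t\rangle=\langle N_t,Y_t\rangle=-\|N_t\|_*$, where the last equality is the optimal value in Proposition~\ref{prop:1}.
\item \emph{Frobenius norm.} $B_t$ is $n\times p$ and $\|B_t\|_2\le\|Y_t\|_2\le 1$, so $\|B_t\|_F^2\le p$.
\end{itemize}
Plugging both into Assumption~\ref{ass:1} with step $\eta B_t$ gives
\[
f(X_{t+1})\le f(X_t)-\eta\|N_t\|_*+\tfrac{L\eta^2p}{2}.
\]

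The key step, and the one I expect to need care, is a lower bound $\|N_t\|_*\ge\|\mathrm{grad} f(X_t)\|_F/\sqrt2$. I would use $\|N_t\|_*\ge\|N_t\|_F$ and compute both Frobenius norms in the basis $Q=[X\ X_\perp]$. Write $A=X^\top G$ and $S=X_\perp^\top G$. Then
\[
Q^\top N Q=\begin{bmatrix}\operatorname{skew}(A)&-S^\top/2\\ S/2&\mathbf{0}\end{bmatrix},\qquad
Q^\top\mathrm{grad} f=\begin{bmatrix}\operatorname{skew}(A)\\ S\end{bmatrix}.
\]
This yields $\|N\|_F^2=\|\operatorname{skew}A\|_F^2+\tfrac12\|S\|_F^2\ge\tfrac12\|\mathrm{grad} f\|_F^2$.

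Finally, I telescope over $t=0,\dots,T-1$ and use $f(X_T)\ge\inf f$:
\[
\min_t\|\mathrm{grad} f(X_t)\|_F\le\sqrt2\Big(\frac{\Delta}{\eta T}+\frac{L\eta p}{2}\Big).
\]
With $\eta=\sqrt{2\Delta/(pLT)}$, both terms inside the parentheses equal $\sqrt{pL\Delta/(2T)}$. The bound therefore becomes $\sqrt{4pL\Delta/T}$, which is at most the claimed $\sqrt{8p\Delta L/T}$.
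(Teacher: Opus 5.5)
Your proof is correct. Its skeleton matches the paper's: Assumption~\ref{ass:1} with $\langle \mathrm{grad} f(X_t),B_t\rangle=-\|N_t\|_*$ and $\|B_t\|_F^2\le p$, followed by telescoping. The two arguments differ in the final comparison between $\|N_t\|_*$ and $\|\mathrm{grad} f(X_t)\|_F$.

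\textbf{The paper's route.} It uses the identity $\operatorname{skew}(\nabla f(X)X^\top)X=\tfrac12(I_n+XX^\top)\mathrm{grad} f(X)$ and inverts $I_n+XX^\top$ by Woodbury. It then chains $\|\mathrm{grad} f\|_F\le 2\|NX\|_F\le 2\|NX\|_*\le 2\|N\|_*$. This ties the bound to the canonical-metric gradient $2\operatorname{skew}(\nabla f X^\top)X$. The cost is that multiplying by $X$ discards the $-S^\top/2$ block. In your notation, $\|NX\|_F^2=\|\operatorname{skew}A\|_F^2+\tfrac14\|S\|_F^2$, so the step $\|\mathrm{grad} f\|_F\le 2\|NX\|_F$ is tight when $\operatorname{skew}A=\mathbf{0}$.

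\textbf{Your route.} Your block computation in the basis $[X\ X_\perp]$ keeps both off-diagonal blocks. This gives $\|N\|_F^2=\|\operatorname{skew}A\|_F^2+\tfrac12\|S\|_F^2$, hence the factor $\sqrt2$ instead of $2$. You therefore prove the sharper bound $\sqrt{4p\Delta L/T}$, and your argument is more elementary (no Woodbury inversion).

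\textbf{Algorithm~\ref{alg:2}.} You are more explicit than the paper here. You check, via uniqueness of the minimal-rank polar factor, that it yields the same $B_t$ as Algorithm~\ref{alg:1}. The thresholding assumption ($W=\mathbf{0}$) is not actually needed, though. As the paper implicitly uses, $-\operatorname{skew}(UV^\top)$ from any unthresholded SVD is still skew-symmetric, has spectral norm at most $1$, and attains $-\|N_t\|_*$. That is all the descent step requires, for either algorithm separately.
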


\begin{proof}
See Section~\ref{theorem:convergence_garancy_proof}.
\end{proof}

\section{Numerical experiments}

We compare Algorithm~\ref{alg:2} for solving the SMP~\eqref{eq:stiefel_muon_problem} with the methods proposed by Bernstein~\cite{bernstein2025manifolds}, Cesista~\cite{cesista2025spectralclipping}, and Su~\cite{kexuefm-11221}, using the SVD-based variant of Su's algorithm. We also consider Riemannion~\cite{bogachev2025lora} and iMuon~\cite{imuon} algorithms. We compare the runtime and relative error for different values of $n$ and $p$. For all methods that require an inner iteration, we use five inner-loop steps. 

In our Algorithm~\ref{alg:2}, the QR decomposition is computed using block BCGS2. We also use exact matrix-sign-function solvers throughout Algorithm~\ref{alg:2} and do not consider iterative approximations such as Newton--Schulz iterations.

All experiments were conducted on an Intel Core i5-10400F 2.90GHz CPU. Points on the Stiefel manifold were generated as the $Q$ factors of random matrices with independent standard normal entries, and the gradient matrices were sampled independently from the same distribution. For each pair $(n,p)$, we performed 50 runs and report the median results. The reference solution was obtained by solving the SMP~\eqref{eq:stiefel_muon_problem} with SCS solver~\cite{ocpb:16} (\(10^{-11}\) both absolute and relative tolerance) in \texttt{CVXPY}~\cite{diamond2016cvxpy}. Relative error was measured in the Frobenius norm:
\[
\frac{\|B_{\texttt{CVXPY}} - B_{\mathrm{alg}}\|_F}{\|B_{\texttt{CVXPY}}\|_F}.
\]
Figures~\ref{fig:exp1_1} and~\ref{fig:exp1_2} show the experimental results.

We also study the dependence of the relative error on the running time of the algorithm, changing the number of inner iterations. The result is shown in Figure~\ref{fig:exp1_3}.

\begin{figure}[h]
    \centering
    \includegraphics[width=\textwidth]{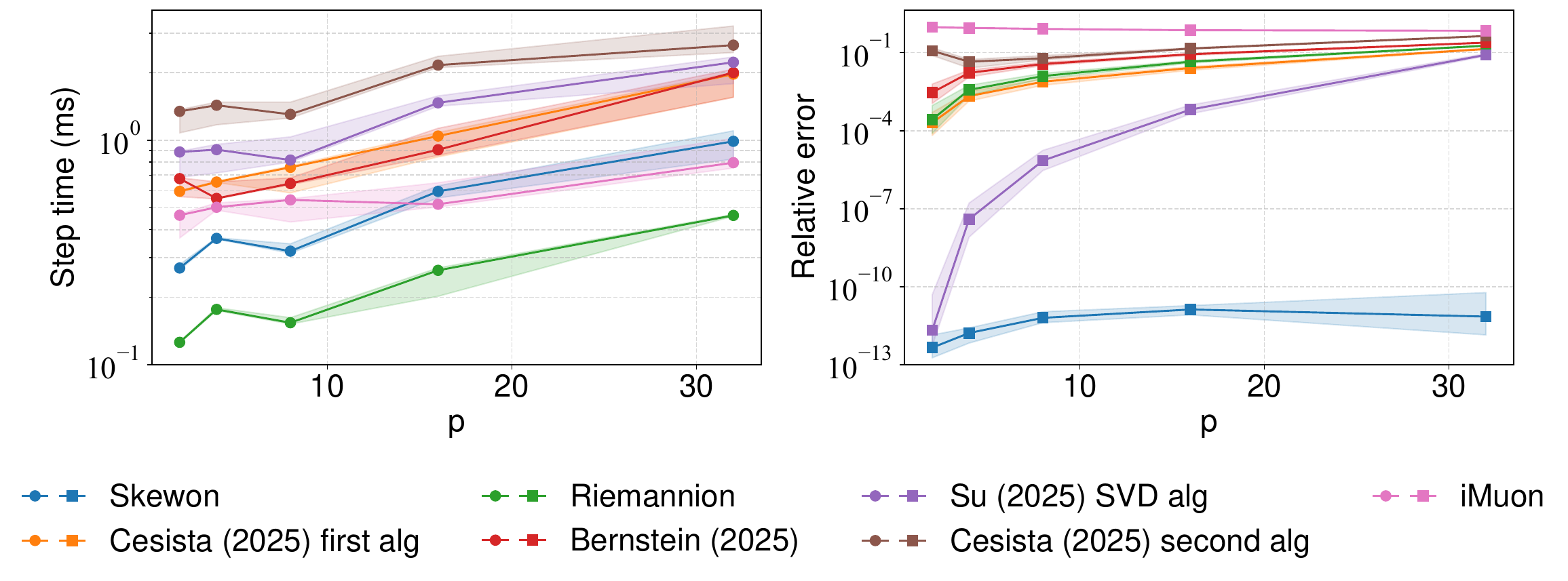}
    \caption{The left graph shows the dependence of the SMP~\eqref{eq:stiefel_muon_problem} solution median time on \(p\). The right graph shows the median relative error of the SMP~\eqref{eq:stiefel_muon_problem} solution at different \(p\). Parameter \(n\) is fixed and equal to \(64\). The range of $0.25$ and $0.75$ quantiles is shaded.}
    \label{fig:exp1_1}
\end{figure}

\begin{figure}[h]
    \centering
    \includegraphics[width=\textwidth]{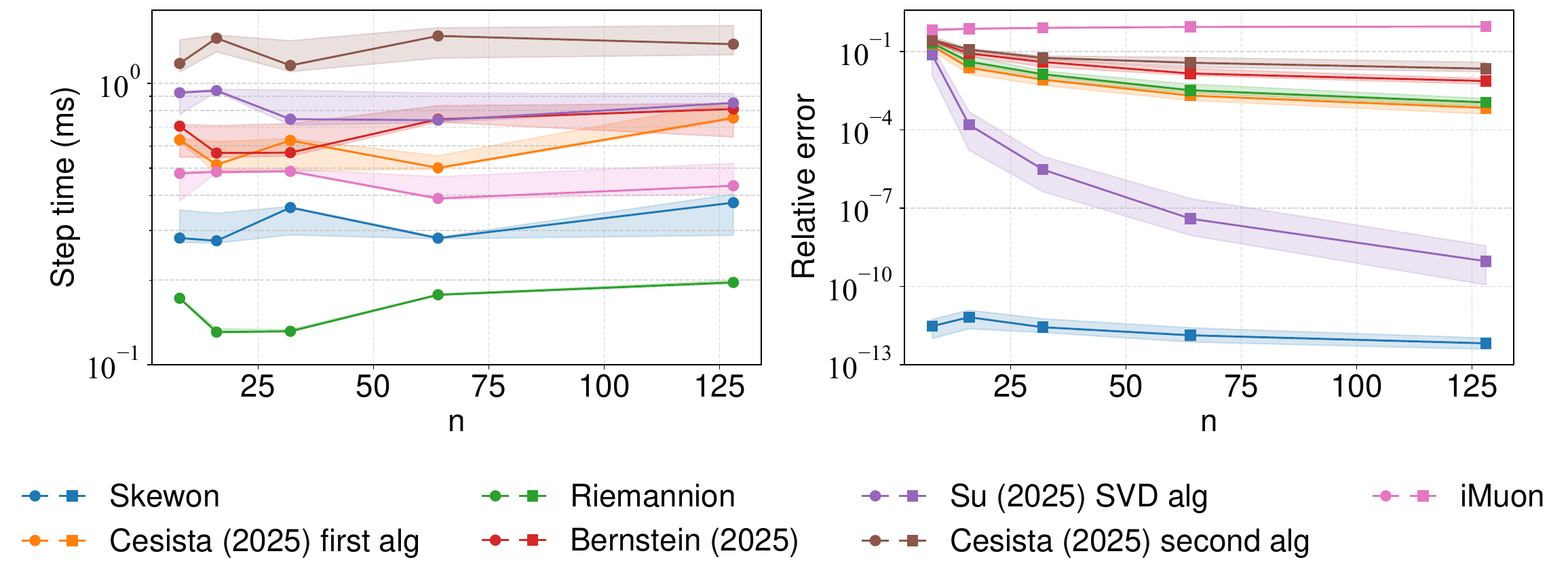}
    \caption{The left graph shows the dependence of the SMP~\eqref{eq:stiefel_muon_problem} solution median time on \(n\). The right graph shows the median relative error of the SMP~\eqref{eq:stiefel_muon_problem} solution at different \(n\). Parameter \(p\) is fixed and equal to \(4\). The range of $0.25$ and $0.75$ quantiles is shaded.}
    \label{fig:exp1_2}
\end{figure}

\begin{figure}[h]
    \centering
    \includegraphics[width=\textwidth]{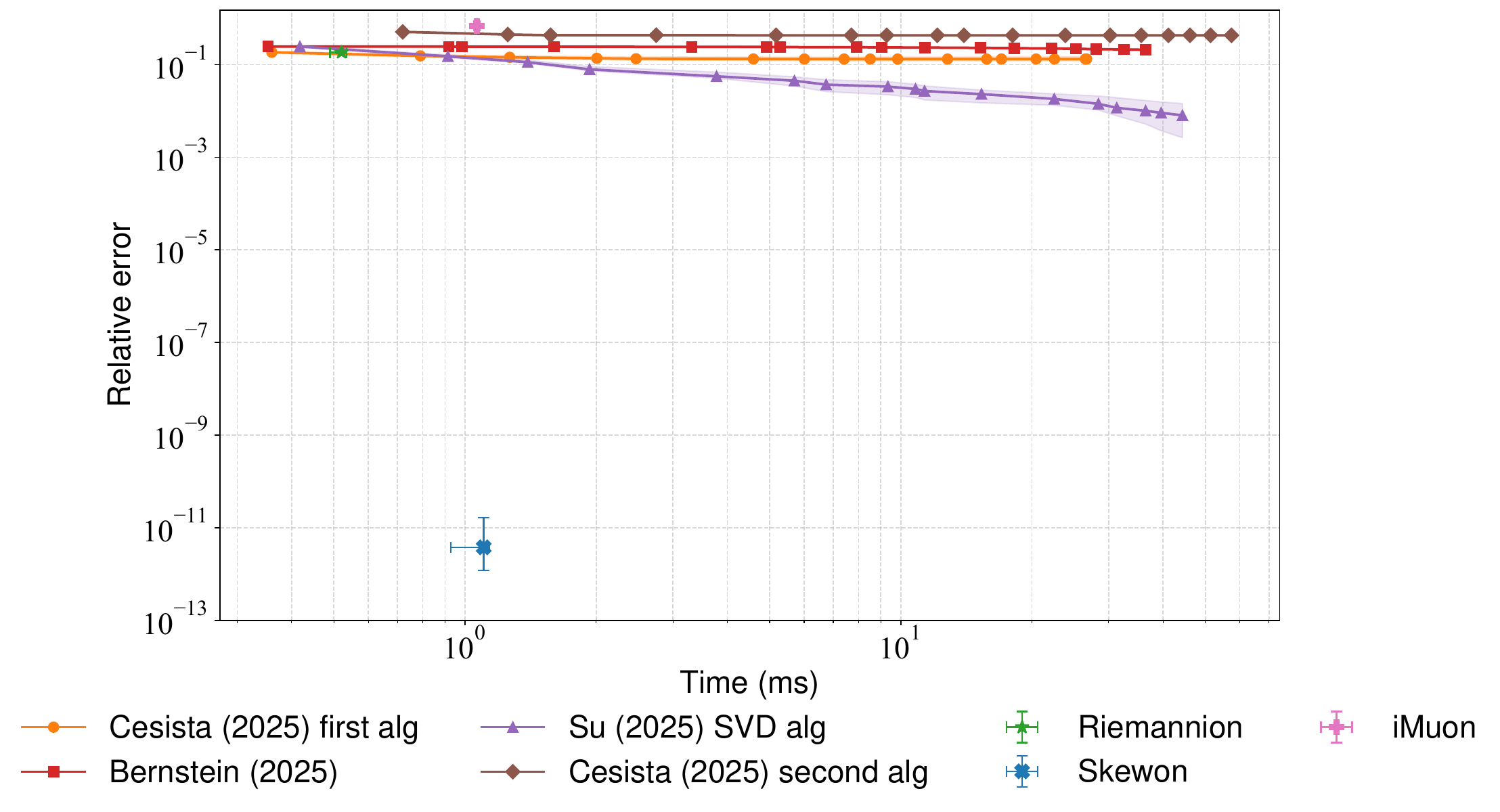}
    \caption{The graph shows the dependence of the relative error on the solution median time for solving the SMP~\eqref{eq:stiefel_muon_problem}. Parameters \(n,p\) are fixed and equal to \(64,32\) respectively. The range of $0.25$ and $0.75$ quantiles is shaded.}
    \label{fig:exp1_3}
\end{figure}

\section{Conclusion}

We presented an exact solution to the spectral linear minimization oracle arising in Muon optimization on the Stiefel manifold. By reformulating the original tangent-space problem as an equivalent optimization problem over skew-symmetric matrices, we derived a closed-form characterization of the optimal solution and proposed the Skewon algorithm for its efficient evaluation. We also established first-order convergence guarantees for the resulting optimization method under standard assumptions in Riemannian optimization.

\bibliography{sn-bibliography}

\begin{appendices}

\section{Additional proofs}\label{sec:proofs}
\subsection{Proof of Proposition~\ref{lemma:boundary_optimum_stifel_muon}}\label{lemma:boundary_optimum_stifel_muon_proof}

Every optimal solution of the SMP~\eqref{eq:stiefel_muon_problem} with nonzero $\mathcal{P}_XM$ is attained on the boundary of the
spectral-norm unit ball, i.e. $\left\|B_{\mathrm{opt}}\right\|_2 = 1$.
\begin{proof}
First we show that for a nonzero \(\mathcal{P}_XM\) every solution of the SMP~\eqref{eq:stiefel_muon_problem} has a strictly negative objective value:
\[
    \langle M, B_{\mathrm{opt}} \rangle < 0.
\]  
This follows, for example, from the fact that one can take $\hat B = -\mathcal{P}_X M /\|\mathcal{P}_X M\|_2$. Since \(\mathcal{P}_X M \in T_X \mathrm{St}(n,p)\) we have \(\hat B \in T_X \mathrm{St}(n,p)\), \(\|\hat B\|_2 = 1\) and 
\[
    \langle M, \hat B\rangle = -\|\mathcal{P}_XM\|^2_F / \|\mathcal{P}_XM\|_2 < 0.
\]
The optimal solution \(B_{\mathrm{opt}}\), on the other hand, cannot have a larger objective value.

Now suppose for contradiction, that a solution \(B_{\mathrm{opt}} \in T_X \mathrm{St}(n,p)\) lies in the interior of the spectral-norm ball, that is,
\[
0 <\|B_{\mathrm{opt}}\|_2 < 1.
\]
Let 
\begin{equation*}
    \tilde{B} = \frac{B_{\mathrm{opt}}}{\|B_{\mathrm{opt}}\|_2}.
\end{equation*}
Since the Stiefel tangent space is linear, $\tilde{B}$ remains feasible for the problem, and by construction, $\|\tilde{B}\|_2 = 1$. However, its objective value is
\begin{equation*}
    \langle M, \tilde{B} \rangle = \frac{1}{\|B_{\mathrm{opt}}\|_2} \langle M, B_{\mathrm{opt}} \rangle < \langle M, B_{\mathrm{opt}} \rangle.
\end{equation*}
This contradicts the assumption that $B_{\mathrm{opt}}$ is the optimal solution. Hence, the solution must satisfy $\|B_{\mathrm{opt}}\|_2 = 1$.
\end{proof}

\subsection{Proof of Theorem~\ref{theorem:convergence_garancy}}\label{theorem:convergence_garancy_proof}
Under Assumption~\ref{ass:1}, for any realization $\{X_t\}_{t\in \mathbb{N}}$ of Algorithm~\ref{alg:1} or Algorithm~\ref{alg:2}, let $\Delta \geq f(X_0) - \inf_{X \in \mathrm{St}(n,p)} f(X)$, and choose the optimization step size $\eta = \sqrt{{2\Delta}/{pLT}}$. Then,
\[
\min_{t\in\{0,\ldots,T-1\}} \left\|\mathrm{grad}f(X_t)\right\|_F \leq \sqrt{\frac{8p\Delta L}{T}}.
\]

\begin{proof}
Let $\mathrm{skew}\left(\nabla f(X_t)X_t^\top\right) = U_t\Sigma_t V_t^\top$ --- any thin SVD. By defining solution of \eqref{eq:skewon_problem_2} in Algorithm~\ref{alg:1} or Algorithm~\ref{alg:2} as 
\[
B_t = -\mathrm{skew}\left(U_tV_t^\top\right)X_t,
\]
we get 
\[
\langle \nabla f(X_t), B_t \rangle = -\| \mathrm{skew}\left( \nabla f(X_t)X_t^\top\right) \|_*.
\]
Note that since $B_t \in T_{X_t}\mathrm{St}(n,p)$, then 
\begin{align*}
\langle \nabla f(X_t), B_t \rangle = \langle \nabla f(X_t), \mathcal{P}_{X_t}B_t \rangle = \langle \mathcal{P}_{X_t}\nabla f(X_t), B_t \rangle =  \langle \mathrm{grad} f(X_t), B_t \rangle.
\end{align*}
Then, using Assumption~\ref{ass:1}:
\begin{align*}
f(X_{t+1}) &\leq f(X_t) + \eta_t \langle \mathrm{grad} f(X_t), B_t \rangle + \frac{\eta^2_tL}{2}\|B_t\|_F^2 \leq \\
&\leq f(X_t) - \eta_t \|\mathrm{skew}\left( \nabla f(X_t)X_t^\top\right)\|_* + \frac{\eta^2_tpL}{2}.
\end{align*}
Telescoping for $t = 0,..., T-1$, we get
\begin{align*}
\sum_{t=0}^{T-1} \eta _t\left\|\mathrm{skew}\left( \nabla f(X_t)X_t^\top\right)\right\|_* \leq f(X_0) - f(X_T) + \frac{pL}{2} \sum_{t=0}^{T-1} \eta_t^2 \leq \Delta + \frac{pL}{2} \sum_{t=0}^{T-1} \eta_t^2,
\end{align*}
where $\Delta$ is exactly finite due to compactness of the Stiefel manifold. For a constant step $\eta_t=\sqrt{\frac{2\Delta}{pLT}}$, we obtain
\[
\min_{t\in\{0,...,T-1\}} \left\|\mathrm{skew}\left( \nabla f(X_t)X_t^\top\right)\right\|_* \leq \sqrt{\frac{2\Delta p L}{T}}.
\]
Note that in fact, for all $X\in \mathrm{St}(n,p)$ the Riemannian gradient in the canonical metric ($I_n-\frac{1}{2}XX^\top$) is given as $\mathrm{grad^c}f(X) = 2\mathrm{skew}\left(\nabla f(X)X^\top\right)X$. We can write $2\|\mathrm{skew}\left( \nabla f(X)X^\top\right)X \|_* \leq 2\|\mathrm{skew}\left(\nabla f(X)X^\top\right)\|_*$. And thus the guarantees of convergence have already been obtained. However, we will reduce to the Riemannian gradient in our metric. 

Consider the following expression for all $X\in \mathrm{St}(n,p)$:
\begin{align*}
\mathrm{skew}( \nabla f(X)X^\top)X &= \frac{1}{2}\left(\nabla f(X)-X\nabla f(X)^\top X\right) = \\ &=  \frac{1}{2}\left(\nabla f(X)-X\nabla f(X)^\top X \pm XX^\top \nabla f(X) \right) = \\ &= \frac{1}{2}\left(I_n+XX^\top\right)\mathrm{grad}f(X).\end{align*}
Thus, using the invertibility of the matrix $I_n + XX^\top$ and Woodbury formula, we have
\begin{align*}
\left\|\mathrm{grad}f(X)\right\|_F &= 2\left\| \left(I_n - \frac{1}{2}XX^\top\right)\mathrm{skew}( \nabla f(X)X^\top)X \right\|_F \leq \\ &\leq2\left\| \left(I_n - \frac{1}{2}XX^\top\right)\right\|_2 \left\|\mathrm{skew}( \nabla f(X)X^\top)X \right\|_F \leq \\ &\leq
2 \left\|\mathrm{skew}( \nabla f(X)X^\top)X \right\|_F \leq \\ &\leq 2 \left\|\mathrm{skew}( \nabla f(X)X^\top)X \right\|_* \leq 2\|\mathrm{skew}\left(\nabla f(X)X^\top\right)\|_*. 
\end{align*}
Therefore
\[
\min_{t\in\{0,...,T-1\}} \left\|\mathrm{grad}f(X_t)\right\|_F \leq \sqrt{\frac{8\Delta p L}{T}}.
\]
\end{proof}

\subsection{SVD polar factor in rank deficient case}
\begin{proposition}\label{prop:skewSVD} 
Let $N \in \mathcal{A}^n$ have rank $r=2k$. Let $N = U\Sigma V^T$ be an SVD. Define 
\[ 
Q = UV^T, \qquad S = \frac{1}{2}(Q-Q^T). 
\] 
Then $S$ has $2k$ singular values equal to $1$, and the remaining $n-2k$ singular values lie in the interval $[0,1]$: 
\[ 
\sigma(S) = \big(\underbrace{1,\ldots,1}_{2k},\,\mu_1,\ldots,\mu_{n-2k}\big), \qquad \mu_j \in [0,1] \ \text{ for all } j. 
\] 
\end{proposition}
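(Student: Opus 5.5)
We split $Q=UV^\top$ along the range of $N$ and its orthogonal complement, and show that $S$ is block diagonal in the same splitting. Write $U=[U_1\ U_2]$, $V=[V_1\ V_2]$ as in Proposition~\ref{prop:1}, with $U_1,V_1\in\mathbb{R}^{n\times 2k}$, so that $Q=U_1V_1^\top+U_2V_2^\top$.

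\emph{Step 1: the rank-$2k$ part is already skew.} The partial isometry $P:=U_1V_1^\top$ is the polar factor of $N$, and it is uniquely determined by $N$, e.g.\ $P=N(N^\top N)^{+1/2}$. Transposing $N=-N^\top$ gives
\[
P^\top = V_1U_1^\top=(N^\top)(NN^\top)^{+1/2}=-N(N^\top N)^{+1/2}=-P,
\]
where we used $NN^\top=N^\top N$ for skew $N$. Hence $\operatorname{skew}(P)=P$. Moreover, $\operatorname{range}(U_1)=\operatorname{range}(N)$, and $\operatorname{range}(V_1)=\operatorname{range}(N^\top)=\operatorname{range}(N)$. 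So $U_1$ and $V_1$ span the same $2k$-dimensional subspace $\mathcal{R}$, and $U_2,V_2$ both span $\mathcal{R}^\perp$.

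\emph{Step 2: block diagonality.} Choose an orthonormal basis $Z=[Z_1\ Z_2]$ with $Z_1$ spanning $\mathcal{R}$ and $Z_2$ spanning $\mathcal{R}^\perp$. Then $U_1V_1^\top=Z_1 O_1 Z_1^\top$ and $U_2V_2^\top=Z_2 O_2Z_2^\top$, where $O_1\in O(2k)$ and $O_2\in O(n-2k)$. By Step 1, $O_1$ is skew as well as orthogonal. It follows that
\[
S = Z\begin{bmatrix} O_1 & 0\\ 0 & \operatorname{skew}(O_2)\end{bmatrix}Z^\top .
\]

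\emph{Step 3: read off singular values.} Since $O_1$ is orthogonal, it contributes $2k$ singular values equal to $1$. The remaining singular values $\mu_j$ are those of $\operatorname{skew}(O_2)$. They satisfy $\mu_j\ge 0$ trivially, and $\|\operatorname{skew}(O_2)\|_2\le \tfrac12(\|O_2\|_2+\|O_2^\top\|_2)=1$ by the triangle inequality. This gives exactly the claimed spectrum of $S$.

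The main point needing care is Step 1, namely justifying that $U_1V_1^\top$ is skew for an arbitrary SVD with repeated singular values. The pseudo-inverse square-root formula handles this cleanly, because it shows the factor is independent of the choice of SVD. Everything else is bookkeeping.
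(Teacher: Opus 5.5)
Your proof is correct, and it reaches the same block-diagonal form of $S$ as the paper by a different mechanism.

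\textbf{The paper's route.} The paper passes to the real Schur form $N=W\Lambda W^\top$ with $\Lambda=\operatorname{diag}(\lambda_1J,\dots,\lambda_kJ,0_{n-2k})$. It argues block by block that each $\lambda_iJ=J(\lambda_iI_2)$ has polar factor $J$, while the zero block admits an arbitrary $Q_0\in O(n-2k)$. It then concludes $Q=W\operatorname{diag}(J,\dots,J,Q_0)W^\top$.

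\textbf{Your route.} You avoid any canonical form. The formula $U_1V_1^\top=N(N^\top N)^{+1/2}$ gives, in one line via $N^\top N=NN^\top$, both that the range part of $Q$ does not depend on the chosen SVD and that it is skew. The identity $\operatorname{range}(N)=\operatorname{range}(N^\top)$ then supplies the splitting $\mathcal{R}\oplus\mathcal{R}^\perp$ on which $Q$ is block diagonal.

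\textbf{What each buys.} The paper's version is more explicit: the range block is visibly $\operatorname{diag}(J,\dots,J)$, and the equality of left and right null spaces is read directly off the Schur form. However, its blockwise computation treats the SVD of $N$ as if it were assembled from SVDs of the individual $2\times 2$ blocks. An arbitrary SVD need not split along the Schur blocks, for instance when some $\lambda_i$ coincide. So the paper's ``consequently'' tacitly rests on the global uniqueness of $U_1V_1^\top$, which is exactly what your Step~1 proves outright. Your argument is therefore the more self-contained of the two, at the cost of not exhibiting the $J$-block structure. As a side benefit, it never uses that the rank is even; that fact would follow anyway from $O_1$ being both skew and orthogonal.
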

\begin{proof}
By the real Schur decomposition for skew-symmetric matrices, there exist an orthogonal
matrix \(W\) and positive numbers
\(\lambda_1,\dots,\lambda_k\) such that
\[
N
=
W\Lambda W^T,
\qquad
\Lambda
=
\operatorname{diag}
(\lambda_1J,\dots,\lambda_kJ,0_{n-2k}),
\]
where
\[
J=
\begin{pmatrix}
0&1\\
-1&0
\end{pmatrix}.
\]

For any nonzero block $\lambda_i J$, its SVD takes the form $\lambda_i J = U_i(\lambda_i I_2)V_i^T$. Because $\lambda_i I_2$ is nonsingular, the matrix $U_i V_i^T$ is uniquely determined. Since $\lambda_i J = J(\lambda_i I_2)$ is a valid polar decomposition, it follows that $U_i V_i^T = J$ independently of the choice of singular vectors.

For the zero block, however, the orthogonal factor corresponding to this block can be an arbitrary orthogonal matrix $Q_0 \in O(n-2k)$.

Consequently, the matrix $Q$ has the form
\[
Q  = W \operatorname{diag}(J,\dots,J,Q_0) W^T.
\]
Since $J^T = -J$, the skew-symmetric part of $Q$ is given by
\[
S = \frac{1}{2}(Q-Q^T) = W \operatorname{diag}\left(J,\dots,J,\frac{1}{2}(Q_0-Q_0^T)\right) W^T.
\]
Because $W$ is orthogonal, the singular values of $S$ are the union of the singular values of its diagonal blocks. Each $2 \times 2$ block $J$ is an orthogonal matrix and therefore contributes two singular values equal to $1$, yielding $2k$ ones in total. 

Let $S_0 = (Q_0-Q_0^T)/2$ be the remaining block. Since $Q_0$ is orthogonal, the spectral norm of $S_0$ is bounded by
\[
\|S_0\|_2 \le \frac{1}{2} \big( \|Q_0\|_2 + \|Q_0^T\|_2 \big) = 1.
\]
Since the spectral norm bounds the maximum singular value, the $n-2k$ singular values of $S_0$, denoted as $\mu_1,\dots,\mu_{n-2k}$, must lie in the interval $[0,1]$. Thus,
\[
\sigma(S) = \big( \underbrace{1,\dots,1}_{2k}, \mu_1,\dots,\mu_{n-2k} \big),
\]
which completes the proof.
\end{proof}

\end{appendices}

\end{document}